\documentclass[11pt, oneside, a4paper]{article}
\usepackage[utf8]{inputenc}
\usepackage[T1]{fontenc}
\usepackage{amsmath}
\usepackage{amsthm,enumerate} 
\usepackage{amssymb}
\usepackage{listings}
\usepackage{url}
\usepackage{hyperref}
\usepackage{xcolor}
\usepackage{color}
\usepackage{dsfont}
\usepackage{tikz-cd}
\usepackage{cleveref}

\allowdisplaybreaks[4]

\newtheorem{thm}{Theorem}[section]
\newtheorem{lemma}[thm]{Lemma}
\newtheorem{result}[thm]{Result}

\newtheorem{cor}[thm]{Corollary}
\newtheorem{prop}[thm]{Proposition}
\newtheorem{notation}[thm]{Notation}
\newtheorem{remark}[thm]{Remark}
\newtheorem{l+d}[thm]{Lemma und Definition}
\newtheorem{k+d}[thm]{Corollary und Definition}

\newtheorem{example}[thm]{Example}

\newtheorem{bem+de}[thm]{Defintion+Bemerkung}

\newcommand{\EKR}{\mathcal{F}}
\newcommand{\OO}{\mathcal{O}}
\newcommand{\Gauss}[2]{{\begin{bmatrix} #1 \\ #2 \end{bmatrix}_q}}
\DeclareMathOperator{\PG}{PG}

\title{Cocliques in the Kneser graph on $(n-1,n)$-flags of $\PG(2n,q)$ }

\author{Philipp Heering 
\footnote{Justus-Liebig-Universität, Mathematisches Institut, Arndtstraße 2, D-35392 Gießen, Germany, philipp.heering@math.uni-giessen.de } }

\date{2026}

\begin{document}

\maketitle

\begin{center}
\subsection*{Abstract}
\end{center}

In the finite projective space PG$(2n,q)$ we consider flags of type $(n-1,n)$, that is, pairs $(A,B)$ consisting of an $(n-1)$-space $A$ and an $n$-space $B$ that are incident. Two such flags $(A_1,B_1)$ and $(A_2,B_2)$ are opposite if $A_1\cap B_2=A_2\cap B_1=\emptyset$. 
Let $\Gamma_{2n}$ be the graph whose vertices are the flags of type $(n-1,n)$ of PG$(2n,q)$, with two vertices being adjacent if the corresponding flags are opposite.
Using the Erd\H{o}s-Matching theorem for vector spaces shown by Ihringer, we determine, for $q$ large enough, the largest cocliques of $\Gamma_{2n}$ and obtain a stability result.
This EKR-type theorem 
proves a conjecture of D'haeseleer, Metsch and Werner. 

\textbf{Keywords:}  Erd\H{o}s-Ko-Rado, generalized Kneser graph, coclique\\
 \textbf{MSC (2020):} 
 05C69, 
 51E20, 
 05C35 

\section{Introduction}

Consider the projective space $\PG(2n,q)$ of dimension $2n$. An \emph{$i$-space} of $\PG(2n,q)$ is a subspace of $\PG(2n,q)$ with projective dimension $i$, in particular a $0$-space is a point, a $1$-space is a line and so on. Now, consider an $(n-1)$-space $A$ and an $n$-space $B$ of $\PG(2n,q)$. If $A$ is contained in $B$, we call $(A,B)$ an \emph{$(n-1,n)$-flag} of $\PG(2n,q)$. Two such flags $(A_1,B_1)$ and $(A_2,B_2)$ are called \emph{opposite} if $A_1\cap B_2=A_2\cap B_1=\emptyset$. For $n\geq 2$ let $\Gamma_{2n}$ be the graph whose vertices are the $(n-1,n)$-flags of $\PG(2n,q)$, with two vertices being adjacent if the corresponding flags are opposite.
We want to study cocliques of $\Gamma_{2n}$.

This research falls under the umbrella of Erd\H{o}s-Ko-Rado (EKR) problems and similar problems have been studied \cite{pointhyperplaneflags, point-plane, k=3_Hilton_Milner, IMM18, Thechromaticnumberoftwo}. In fact, the problem we are concerned with is a direct generalization of previous work.
Cocliques of $\Gamma_{2n}$ were studied by  Blokhuis and Brouwer \cite{cocliquesonlineplane4d} for $n=2$ and by Metsch and Werner \cite{werner_metsch} for $n=3$.

Out of all the EKR problems for flags, the problem for chambers of spherical buildings has enjoyed the most attention in recent years.
In \cite{AlgebraicApproach, AlgebraicApproach2, klaus_jesse_philipp} the largest EKR sets for chambers of odd dimensional projective spaces were characterized for $q$ large enough. 
On the other hand, for the EKR problem on chambers of even dimensional projective spaces very few results are known.
 The case $\PG(2,q)$ is probably folklore. The case $\PG(4,q)$ was solved in \cite{heering_PG4q} for $q$ large. 
However, the proof of \cite{heering_PG4q} regarding the EKR problem for chambers of $\PG(4,q)$, relies on a description of the largest cocliques of $\Gamma_{2n}$ for $n=2$ \cite{cocliquesonlineplane4d}.
 In particular \cite{heering_PG4q} relies also on the existence of a stability version of the results of \cite{cocliquesonlineplane4d}, which can be found in \cite{DHAESELEER2022103474}. This suggests that studying the largest cocliques of $\Gamma_{2n}$ may be helpful when attacking the EKR problem for chambers of $\PG(2n,q)$.

Next up, we describe large cocliques for $\Gamma_{2n}$.
\begin{example} \label{E: main}
\begin{enumerate} [(a)]
    \item Let $H$ be a hyperplane of $\PG(2n,q)$ and let $X$ be either $(i)$ a point of $H$, or $(ii)$ an $(2n-2)$-space incident with $H$.
    Now, let $\EKR$ be the set of all $(n-1,n)$-flags such that for every $(A,B)\in \EKR$ we have $B\subseteq H$, or $A$ is incident with $X$ and $H$.
    \item Let $P$ be a point of $\PG(2n,q)$ and let $X$ be either $(i)$ a hyperplane incident with $P$, or $(ii)$ a line incident with $P$.
    Now, let $\EKR$ be the set of all $(n-1,n)$-flags such that for every $(A,B)\in \EKR$ we have $P\subseteq A$, or $B$ is incident with $X$ and $P$.
\end{enumerate}
\end{example}

It was already observed in Theorem 5.1 of \cite{cocliquesonlineplane4d} that the construction of Example \ref{E: main} does not contain opposite flags, i.e. the construction yields a coclique of $\Gamma_{2n}$.
Observe that parts (a) and (b) of \Cref{E: main} are dual to one another.
There is also some form of duality between the parts $(i)$ and $(ii)$. Consider for example case $(a)$, then $X$ is either a point or a hyperplane of $H$.\\
Next up, we want to count how many flags are in a set $\EKR$ as described in \Cref{E: main} as a warmup.
Beforehand, we recall here the definition of the Gaussian coefficient. 
 \begin{align*}
 \Gauss{b}{a}=
 \begin{cases}
\displaystyle \prod_{i=1}^a\frac{q^{b-a+i}-1}{q^i-1} & \text{if $0\le a\le b$,}
\\
0 & \text{otherwise.} \end{cases}
\end{align*}
We remark that we expect the reader to be moderately familiar with combinatorics in projective spaces. The term $\Gauss{b}{a}$ is a $q$-polynomial of degree $a(b-a)$, so in particular $\Gauss{b}{a}$ is $\OO(q^{a(b-a)})$.\\
Now, we count the size of the set $\EKR$ as described in \Cref{E: main} $(a)(i)$ and remark that duality implies that all constructions described in \Cref{E: main} have the same size.

The number of $n$-spaces incident with a hyperplane $H$ of $\PG(2n,q)$ is $\Gauss{2n}{n+1}$. Each such $n$-space contains exactly $\Gauss{n+1}{1}$ many $(n-1)$-spaces. The number of $(n-1)$-spaces incident with the point $X$ and the hyperplane $H$ is $\Gauss{2n-1}{n-1}$. Each such $(n-1)$-space is incident with exactly $q^n$ many $n$-spaces that are not contained in $H$. Thus the set in question has size $\Gauss{2n}{n+1} \Gauss{n+1}{1} +\Gauss{2n-1}{n-1}q^n$. \\

In order to state our main result we need one more definition.
\begin{align*}
f(n,q):=\left\{\begin{array}{ll}
3\Gauss{n}{1}\Gauss{2n-2}{n-2} & \text{if $n\geq 4$ and $q\geq 4$,}
\\
q^5+2q^4+3q^3+2q^2+q+1 &  \text{if $n=3$,}
\end{array}
\right.
\end{align*}

\begin{thm} \label{T: main}
For $n\geq 3$ let $\Gamma_{2n}$ be the graph whose vertices are the $(n-1,n)$-flags of $\PG(2n,q)$, with two vertices being adjacent if the corresponding flags are opposite. For $q$ large enough compared to $n$, a maximal coclique $\EKR$ of $\Gamma_{2n}$ falls into exactly one of the following categories.
\begin{enumerate} [(a)]
    \item $|\EKR|=\Gauss{2n}{n-1} \Gauss{n+1}{1} +\Gauss{2n-1}{n-1}q^n$ and $\EKR$ is one of the structures described in Example \ref{E: main}.
    \item $|\EKR|\leq \Gauss{2n}{n-1} \Gauss{n+1}{1} +f(n,q)\cdot q^n$ and there is either a hyperplane $H$, such that $\EKR$ contains all flags $(A,B)$ with $B\subseteq H$, or there is a point $P$ such that $\EKR$ contains all flags $(A,B)$ with $P\subseteq A$.
    \item $|\EKR|$ is at most $\OO(q^{n^2+n-2})$.
\end{enumerate}
\end{thm}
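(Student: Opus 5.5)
The plan is to split a maximal coclique $\EKR$ according to the family $\mathcal{B}=\{B:(A,B)\in\EKR\}$ of $n$-spaces and the family $\mathcal{A}=\{A:(A,B)\in\EKR\}$ of $(n-1)$-spaces. Since two flags with $B_1\cap B_2=\emptyset$ can never be opposite only if $A_1\cap B_2\neq\emptyset$ or $A_2\cap B_1\neq\emptyset$, and in $\PG(2n,q)$ two $n$-spaces generically meet in a point, I would first record the basic local facts. First, for a fixed $n$-space $B$ the number of flags $(A,B)$ is $\Gauss{n+1}{1}$, and for fixed $A$ the number of $B$ is $\Gauss{n+1}{1}$. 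Second, if $B_1\cap B_2$ is a single point $Q$, then the flags on $B_1,B_2$ are opposite unless $Q\in A_1$ or $Q\in A_2$.

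From these facts, the key dichotomy will be the following. Call $B$ \emph{full} if all $\Gauss{n+1}{1}$ flags on $B$ lie in $\EKR$, and dually call $A$ \emph{full}. Full $n$-spaces $B_1,B_2$ must pairwise not be disjoint-in-the-relevant-sense. Concretely, if $B_1,B_2$ meet only in a point, we can choose $A_i\subseteq B_i$ missing that point, which gives opposite flags. Hence any two full $n$-spaces meet in at least a line, and dually any two full $(n-1)$-spaces span at most a $(2n-3)$-space.

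The heart of the argument is to translate this into an Erd\H{o}s--Matching-type statement in the quotient geometry. Families of $n$-spaces of $\PG(2n,q)$ (vector dimension $n+1$ in $V(2n+1,q)$) pairwise meeting in vector dimension $\ge 2$ have no ``matching'' after suitable reinterpretation. Using Ihringer's Erd\H{o}s Matching theorem for vector spaces, such a family is, for $q$ large, either contained in a hyperplane, or consists of spaces through a point, or has size $\OO(q^{(n+1)n-2})$. The same trichotomy holds dually for full $(n-1)$-spaces.

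A counting step then shows that if $|\EKR|$ exceeds $\OO(q^{n^2+n-2})$, most flags of $\EKR$ lie on full $B$'s or full $A$'s. The reason is that non-full $n$-spaces contribute at most $\Gauss{n+1}{1}-1$ flags, and the total number of $n$-spaces is only $\Theta(q^{n(n+1)})$. A weighted averaging argument will be needed here, and making it give exactly the exponent $n^2+n-2$ is the delicate part. This yields case (c), or else a hyperplane $H$ containing nearly all full $B$, or dually a point $P$ in nearly all full $A$. By maximality and a short argument, $\EKR$ then contains all flags with $B\subseteq H$, which is the structure required in (b).

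In the remaining step, with all flags $B\subseteq H$ present, every further flag $(A,B)$ with $B\not\subseteq H$ must meet every flag in $H$ non-oppositely. This forces $A\subseteq H$, because otherwise $A\cap H$ would be an $(n-2)$-space and we could find $(A',B')$ in $H$ opposite to it. The residual flags are then parametrized by $(n-1)$-spaces $A$ of $H$, each carrying $q^n$ extensions. The residual family of $A$'s must be pairwise non-opposite, which means that the corresponding $(n-1)$-spaces of the $(2n-1)$-space $H$ pairwise intersect. EKR for $(n-1)$-spaces in $\PG(2n-1,q)$ together with its Hilton--Milner stability then gives two outcomes. The first is a point star (case (a)(i)) or its dual structure (case (a)(ii)), of size $\Gauss{2n-1}{n-1}q^n$. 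The second is a Hilton--Milner-type family bounded by $f(n,q)$, which gives (b). For $n=3$ the explicit polynomial in $f$ comes from the known Hilton--Milner bound for planes in $\PG(5,q)$, and the case $n\ge4$ uses the general bound $3\Gauss{n}{1}\Gauss{2n-2}{n-2}$.

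I expect the main obstacle to be the stability counting that forces either a full hyperplane or a full point. The difficulty is controlling the partially filled $n$-spaces so that the Matching theorem applies, with the error term reaching exactly $q^{n^2+n-2}$.
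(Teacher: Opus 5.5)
Your skeleton matches the paper. Your last paragraph is essentially the paper's argument for (a) and (b): once every $n$-space of $H$ is full, every $A\subseteq H$; the $A$'s of flags with $B\not\subseteq H$ pairwise meet in $H\cong\PG(2n-1,q)$; then EKR/Hilton--Milner applies there.

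The genuine gap is the step you defer as ``the delicate part'': showing that if there are few full spaces, then $|\EKR|=\OO(q^{n^2+n-2})$. The facts you cite cannot give this. The examples in (a) have size of order $q^{n^2+n-1}$, so you must beat them by a factor $q$. But the number of $n$-spaces times $\Gauss{n+1}{1}-1$ is of order $q^{n^2+2n}$, the number of \emph{all} flags. You need a structural reason why only $\OO(q^{n^2-1})$ non-full spaces carry flags of $\EKR$, each carrying only $\OO(q^{n-1})$ of them. The paper supplies both.

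\emph{At most $\Gauss{n}{1}$ flags per non-full space.} This comes from maximality. The flags of $\EKR$ on $B$ are exactly the $(A,B)$ with $A$ containing the span of the points $B\cap B'$, taken over all $(A',B')\in\EKR$ with $A'\cap B=\emptyset$. So their number is $\Gauss{k}{1}$, and a non-full space carries at most $\Gauss{n}{1}$ flags, not $\Gauss{n+1}{1}-1$.

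\emph{Few non-full spaces.} Here Ihringer's Erd\H{o}s Matching theorem is applied to the \emph{non-full} $(n-1)$-spaces, not the full ones. If there are more than $n\Gauss{2n}{n-1}=\OO(q^{n^2-1})$ of them, some $A_1,\dots,A_{n+1}$ among them are pairwise skew (dually for $n$-spaces). Two lemmas absent from your plan then finish.
\begin{enumerate}[(i)]
\item Only $\OO(q^{n^2-1})$ $n$-spaces meet all $A_i$; this is a careful count over the possible intersections $B\cap A_i$.
\item Fix a non-full $A_i$ with $(A_i,B_i)\in\EKR$. The flags $(A',B')\in\EKR$ with $B'\cap A_i=\emptyset$ involve only $\OO(q^{n^2-1})$ distinct $A'$. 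Indeed, $\langle A',A_i\rangle$ must be one of the $\Gauss{n}{1}$ hyperplanes on $B_i$, and the $A'$ in a fixed such hyperplane pairwise meet, so EKR in $\PG(2n-1,q)$ applies.
\end{enumerate}
Splitting $\EKR$ by whether $B$ meets all $A_i$ bounds both parts by $\OO(q^{n^2-1})\Gauss{n}{1}=\OO(q^{n^2+n-2})$.

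The part you do carry out also needs repair.
\begin{enumerate}[(1)]
\item For the full spaces the right tool is EKR plus Hilton--Milner for pairwise intersecting $(n-1)$-spaces of $\PG(2n,q)$, and its dual. The Erd\H{o}s Matching theorem with $s=1$ is just the EKR bound and gives no structure.
\item The correct dichotomy for full $n$-spaces, which pairwise meet in a line, is: all lie in one hyperplane, or there are $\OO(q^{n^2-2})$ of them. ``All through a point'' is instead the alternative for full $(n-1)$-spaces. These pairwise \emph{meet}, i.e.\ span at most a $(2n-2)$-space, not a $(2n-3)$-space.
\item Your bound $\OO(q^{(n+1)n-2})$ on the number of full spaces is vacuous, since any such family has at most $\Gauss{2n}{n+1}=\OO(q^{n^2-1})$ members. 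You need $\OO(q^{n^2-2})$, so that the flags on full spaces, $\Gauss{n+1}{1}$ each, total $\OO(q^{n^2+n-2})$.
\item Passing from ``all full $n$-spaces lie in $H$'' to ``all $n$-spaces of $H$ are full'' needs an argument. Every $A$ of a flag must meet all full $n$-spaces, and there are more than $\OO(q^{n^2-2})$ of them. An $A\not\subseteq H$ meets only $\OO(q^{n^2-2})$ $n$-spaces of $H$, so $A\subseteq H$. Maximality then makes every $n$-space of $H$ full.
\end{enumerate}
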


It is quickly checked that $f(n,q)<\Gauss{2n-1}{n-1}$ for large $q$, as $\Gauss{2n-1}{n-1}$ is a $q$-polynomial of degree $(n-1)n$, whereas $f(n,q)$ is a $q$-polynomial of degree $(n-1)+(n-2)n=(n-1)n-1$.

For $n=3$ the theorem follows from \cite{werner_metsch}. 
We remark that it is possible to derive a more explicit bound in case (c) from our proof.
As this would add only marginal insight but substantially complicate the paper, we omit this.
It is noteworthy that our proof uses the EKR and Hilton-Milner Theorems for vector spaces.
The key ingredient, however, is the Erd\H{o}s-Matching Theorem for vector spaces, which was proven by Ihringer in 2021.

In 2023 the chromatic number of $\Gamma_{2n}$ was determined modulo a conjecture by D'haeseleer, Metsch and Werner \cite{Jozefien_some}. Theorem \ref{T: main} is precisely the confirmation that their conjecture is true. Therefore, by proving Theorem \ref{T: main} we immediately get the following.

\begin{cor}
    For $q$ large enough, the chromatic number of $\Gamma_{2n}$ is $\frac{q^{n+2}-1}{q-1}-q$.
\end{cor}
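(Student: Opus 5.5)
The plan is to obtain the corollary by combining Theorem \ref{T: main} with the conditional result of D'haeseleer, Metsch and Werner \cite{Jozefien_some}; no new geometric arguments should be needed. Their paper has two halves. The upper bound $\chi(\Gamma_{2n})\le \frac{q^{n+2}-1}{q-1}-q$ comes from an explicit colouring and is unconditional. The matching lower bound is proved assuming a conjecture on the structure of large cocliques of $\Gamma_{2n}$. So the first step is to recall the precise formulation of that conjecture. I expect it says: for $q$ large, every maximal coclique of $\Gamma_{2n}$ either is one of the sets of Example \ref{E: main}, or has size at most $\Gauss{2n}{n-1}\Gauss{n+1}{1}+c\,q^{n^2-1}$, or is much smaller. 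The second step is to check that Theorem \ref{T: main} implies this formulation, including any stability clause that the colouring argument uses.

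Concretely, I would check the following. Case (a) of Theorem \ref{T: main} identifies the extremal cocliques exactly. In case (b), $f(n,q)q^n$ has degree $n^2-1$, so these cocliques fall short of the extremal size $\Gauss{2n}{n-1}\Gauss{n+1}{1}+\Gauss{2n-1}{n-1}q^n$ by order $q^{n^2}$. Case (c) gives cocliques of size $\OO(q^{n^2+n-2})$. This is a full factor $q$ smaller than the number of flags with $B\subseteq H$, which has order $q^{n^2+n-1}$. The conjecture concerns maximal cocliques; since every coclique lies in a maximal one, the bounds transfer to all colour classes.

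For completeness I would also sketch how the lower bound follows once the structure is known, as in \cite{Jozefien_some}. Take a colouring with $k$ colours and let $N$ be the number of vertices of $\Gamma_{2n}$. Colour classes of type (c) cover at most $\OO(q^{n^2+n-2})$ flags each. Every other class is essentially attached to a hyperplane $H$, containing all flags with $B\subseteq H$, or to a point $P$, containing all flags with $P\subseteq A$. Double counting then shows the following. If fewer than $\frac{q^{n+2}-1}{q-1}-q$ colours were used, some flag $(A,B)$ would avoid every such structure: $B$ lies in none of the chosen hyperplanes, $A$ contains none of the chosen points, and the flag is not covered by the small extra parts of size $f(n,q)q^n$ or by the type (c) classes either. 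That is a contradiction.

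I expect the main obstacle to be the bookkeeping in this covering count: showing that the leftover parts, whose total size is of order $q^{n^2-1}$ per class times $\OO(q^{n+1})$ classes, cannot cover all flags missed by the point and hyperplane families. Since this is exactly the argument already carried out in \cite{Jozefien_some}, in the paper it suffices to verify that Theorem \ref{T: main} is their conjecture and cite their deduction.
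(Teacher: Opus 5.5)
Your proposal follows the paper's route: the paper also deduces the corollary by noting that Theorem \ref{T: main} is exactly the structural conjecture of D'haeseleer, Metsch and Werner \cite{Jozefien_some}, under which they had already determined $\chi(\Gamma_{2n})$, so nothing beyond the citation is needed. One caveat concerns your optional covering sketch. Classes of type (a) carry leftover parts of size $\Gauss{2n-1}{n-1}q^n$, which is of order $q^{n^2}$ rather than $q^{n^2-1}$, so the bookkeeping as you describe it is too optimistic; since you ultimately rely on their deduction, this does not affect the argument.
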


\section{Two colors}

Throughout this paper, we will denote by $\EKR$ a maximal coclique of $\Gamma_{2n}$.
 Here we draw the readers attention to the difference of maximal and maximum. To avoid confusion, we use the term largest coclique instead of maximum coclique.

\begin{lemma} \label{L: possible weights}
Let $\EKR$ be a maximal coclique of $\Gamma_{2n}$.
    Let $B$ be an $n$-space of $\PG(2n,q)$ that occurs in a flag of $\EKR$. There exists a $k\in\{1,\ldots,n+1\}$, such that the number of flags in $\EKR$ that contain $B$ is precisely $\Gauss{k}{1}$. Furthermore, the number in question is $\Gauss{n+1}{1}$ if and only if every flag $(A',B')\in \EKR$ satisfies $A'\cap B\neq \emptyset$.
\end{lemma}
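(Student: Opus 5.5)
Fix the $n$-space $B$ and look at the $(n-1)$-spaces $A\subseteq B$ with $(A,B)\in\EKR$. The plan is to show that this set is exactly the set of hyperplanes of $B$ containing a certain subspace $S_B$ of $B$. If $\dim S_B=s$ (with $s=-1$ meaning $S_B=\emptyset$), the number of hyperplanes of the $n$-space $B$ through $S_B$ is $\Gauss{n-s}{1}$, so $k=n-s\in\{1,\dots,n+1\}$.

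The key step is to find out which flags $(A,B)$ may be added to $\EKR$ without creating an opposite pair. A flag $(A,B)$ is opposite to $(A',B')\in\EKR$ precisely when $A\cap B'=\emptyset$ and $A'\cap B=\emptyset$.
\begin{itemize}
\item Let $\mathcal{T}_B$ be the set of flags $(A',B')\in\EKR$ with $A'\cap B=\emptyset$.
\item For $(A',B')\in\mathcal{T}_B$, a dimension count in $\PG(2n,q)$ gives $\dim(B\cap B')\ge n+n-2n=0$. Since $A'\cap B=\emptyset$ and $A'$ is a hyperplane of $B'$, we get $\dim(B\cap B')\le 0$. So $B\cap B'$ is a single point $P_{(A',B')}$, which lies in $B$ but not in $A'$.
\item Hence $A\cap B'=A\cap (B\cap B')=A\cap\{P_{(A',B')}\}$. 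This set is empty if and only if $P_{(A',B')}\notin A$.
\end{itemize}
So $(A,B)$ is opposite to no flag of $\EKR$ if and only if $A$ contains every point $P_{(A',B')}$ with $(A',B')\in\mathcal{T}_B$. Let $S_B$ be the span of these points. Then $(A,B)$ is compatible with $\EKR$ if and only if $S_B\subseteq A$.

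Maximality of $\EKR$ now gives that the flags of $\EKR$ with second component $B$ are exactly the $(A,B)$ with $S_B\subseteq A\subseteq B$. One direction holds because a coclique contains no opposite pair. For the other, any compatible flag can be added, so maximality forces it to lie in $\EKR$. Because $B$ occurs in a flag of $\EKR$, at least one hyperplane of $B$ contains $S_B$, so $\dim S_B\le n-1$ and $k\ge 1$. This proves the first claim.

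For the second claim, the count equals $\Gauss{n+1}{1}$ exactly when $k=n+1$, that is, when $S_B=\emptyset$. This happens exactly when $\mathcal{T}_B$ is empty, i.e. when every $(A',B')\in\EKR$ satisfies $A'\cap B\neq\emptyset$. One small check is needed: a flag $(A',B)$ with the same $B$ never lies in $\mathcal{T}_B$, since $A'\subseteq B$ meets $B$. So no self-referential issue arises.

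The only real obstacle is the dimension argument showing that $B\cap B'$ is exactly one point lying outside $A'$. It is routine: two $n$-spaces in a $2n$-space meet in at least a point, and $A'\cap B=\emptyset$ leaves room for at most a point. The rest is bookkeeping together with maximality.
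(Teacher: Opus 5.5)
Your proposal is correct and follows essentially the same argument as the paper: the points $P=B\cap B'$ for flags $(A',B')\in\EKR$ with $A'\cap B=\emptyset$, their span $S$, maximality forcing exactly the $(n-1)$-spaces of $B$ through $S$, and the count $\Gauss{n-s}{1}$. The only difference is organizational. You fold the paper's separate treatment of the red case into the single characterization ``$(A,B)$ is compatible iff $S_B\subseteq A$'', and read off the ``iff'' from $S_B=\emptyset \Leftrightarrow \mathcal{T}_B=\emptyset$ together with the fact that $\Gauss{k}{1}$ is strictly increasing in $k$.
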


\begin{proof}
First, assume that the number  of flags in $\EKR$ that contain $B$ is $\Gauss{n+1}{1}$. This means that for every $(n-1)$-space $A$ contained in $B$, we have $(A,B)\in \EKR$. Let $(A',B')$ be an $(n-1,n)$-flag of $\PG(2n,q)$ with $A'\cap B=\emptyset$. Then $B\cap B'$ is just a point $P$. There exist $(n-1)$-spaces in $B$ that are skew to $P$. Therefore, there are flags in $\EKR$ that are opposite to $(A',B')$, hence $(A',B')$ is not in $\EKR$.

    Now, assume that every flag $(A',B')\in \EKR$ satisfies $A'\cap B\neq \emptyset$. Then every flag $(A,B)$, where $A$ is an $(n-1)$-space contained in $B$, is non-opposite to all flags in $\EKR$. By maximality of $\EKR$, all such flags are in $\EKR$. The number of flags in $\EKR$ containing $B$ is $\Gauss{n+1}{n}=\Gauss{n+1}{1}$.

    Finally, assume that there exist $(n-1,n)$-flags in $\EKR$ whose $(n-1)$-spaces are skew to $B$. Let $(A'_1,B'_1),\ldots,(A'_k,B'_k)$ be all these flags. Now, any flag $(A,B)\in \EKR$ has to satisfy $A\cap B'_1\neq\emptyset,\ldots, A\cap B'_k\neq\emptyset$. 
    Let $1\leq i\leq k$.
    As $A'_i\cap B=\emptyset$, we know that $B'_i\cap B$ is a point, say $P_i$.
    Now $A$ has to be incident with all points $P_i$. In other words, the span of the points $P_i$ has to be incident with $A$. Let $s$ be the dimension of the span $S$ of the points $P_i$.
    We have $0\leq s\leq n-1$, otherwise there could be no flag in $\EKR$ that contains $B$. 
    By maximality of $\EKR$, any $(n-1)$-space $A$ that is incident with $S$ and $B$ occurs in a flag $(A,B)$ of $\EKR$. There are $\Gauss{n-s}{(n-1)-s}=\Gauss{n-s}{1}$ such flags.
\end{proof}

Using duality, we immediately get the following.

\begin{cor}
    \label{C: possible weights}
Let $\EKR$ be a maximal coclique of $\Gamma_{2n}$.
    Let $A$ be an $(n-1)$-space of $\PG(2n,q)$ that occurs in a flag of $\EKR$. There exists a $k\in\{ 1,\ldots,n+1\}$ such that the number of flags in $\EKR$ that contain $A$ is $\Gauss{k}{1}$. Furthermore, the number in question is $\Gauss{n+1}{1}$ if and only if every flag $(A',B')\in \EKR$ satisfies $B'\cap A\neq \emptyset$.
\end{cor}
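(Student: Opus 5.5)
The plan is to deduce Corollary \ref{C: possible weights} from Lemma \ref{L: possible weights} by duality in $\PG(2n,q)$. Fix a polarity, or just the duality $\delta$ of the underlying vector space $V=\mathbb{F}_q^{2n+1}$ that sends a subspace $U$ to its annihilator $U^\perp$ in the dual space. A projective $i$-space has vector dimension $i+1$, so it is mapped to a subspace of vector dimension $2n+1-(i+1)=2n-i$, that is, to a projective $(2n-1-i)$-space. Hence $(n-1)$-spaces and $n$-spaces are interchanged, and since $\delta$ reverses inclusion, an $(n-1,n)$-flag $(A,B)$ is sent to the $(n-1,n)$-flag $(B^\delta,A^\delta)$.

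The first step is to check that $\delta$ preserves oppositeness. Two subspaces $U,W$ with $\dim U+\dim W=2n+1$ (vector dimensions) meet trivially if and only if $U+W=V$, and this holds if and only if $U^\perp\cap W^\perp=0$. Now $A_1\cap B_2=\emptyset$ involves vector dimensions $n$ and $n+1$, which sum to $2n+1$. So $A_1\cap B_2=\emptyset$ is equivalent to $B_2^\delta\cap A_1^\delta=\emptyset$, and this is exactly the condition for the images to be opposite, with the roles of the two defining conditions swapped. Consequently $\delta$ induces an automorphism of $\Gamma_{2n}$, and it maps maximal cocliques to maximal cocliques.

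The second step applies Lemma \ref{L: possible weights} to $\EKR^\delta$ and the $n$-space $A^\delta$. The flags of $\EKR$ containing $A$ correspond bijectively to the flags of $\EKR^\delta$ whose $n$-space is $A^\delta$. The Lemma therefore gives that their number is $\Gauss{k}{1}$ for some $k\in\{1,\ldots,n+1\}$, and that this number equals $\Gauss{n+1}{1}$ if and only if every flag $(C,D)\in\EKR^\delta$ has $C\cap A^\delta\neq\emptyset$. Write $(C,D)=(B'^\delta,A'^\delta)$ with $(A',B')\in\EKR$. By the dimension criterion above, $B'^\delta\cap A^\delta\neq\emptyset$ is equivalent to $B'\cap A\neq\emptyset$, which is the stated characterization.

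No step is a real obstacle. The only point needing care is the dimension bookkeeping that shows skewness is preserved under the duality, and that is handled by the complementary-dimension argument above.
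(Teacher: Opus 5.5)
Your proposal is correct and is the same argument as the paper's, which derives the corollary from Lemma~\ref{L: possible weights} with the single remark ``using duality''. You simply make that duality explicit: it swaps $(n-1)$- and $n$-spaces, preserves oppositeness via the complementary-dimension argument, and turns $B'\cap A\neq\emptyset$ into $B'^{\delta}\cap A^{\delta}\neq\emptyset$, which is exactly what the paper leaves to the reader.
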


Adapting the notation of Blokhuis and Brouwer \cite{cocliquesonlineplane4d}, we introduce the following.

\begin{notation}
Let $\EKR$ be a maximal coclique of $\Gamma_{2n}$.
    An $n$-space $B$, or an $(n-1)$-space $A$ that occurs in $\Gauss{n+1}{1}$ flags of $\EKR$ is called $\EKR$-red or, if there is no risk of confusion, simply red. If the space in question occurs only in $\leq \Gauss{n}{1}$ flags of $\EKR$, but at least in $1$, we call it $\EKR$-yellow or, if there is no risk of confusion, simply yellow.
\end{notation}

A similar definition can be found in \cite{werner_metsch}, here red spaces are called saturated.

Two $(n-1)$-spaces of $\PG(2n,q)$ are in general position if they do not intersect. Dually, two $n$-spaces of $\PG(2n,q)$ are in general position if they span the entire space and hence intersect only in a point. Therefore, two $n$-spaces of $\PG(2n,q)$ that are not in general position intersect in at least a line.

\begin{lemma} 
Let $\EKR$ be a maximal coclique of $\Gamma_{2n}$.
Any two red $(n-1)$-spaces always meet.
Any two red $n$-spaces are not in general position.
\end{lemma}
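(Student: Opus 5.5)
We argue by contradiction, using the characterization of red spaces from Lemma \ref{L: possible weights} and Corollary \ref{C: possible weights} together with the maximality of $\EKR$. By duality (which swaps $(n-1)$-spaces with $n$-spaces, red with red, and "meet" with "not in general position"), it suffices to prove the first assertion: any two red $(n-1)$-spaces meet. The second assertion is then the dual statement. Alternatively, it can be checked directly by the analogous argument, using Lemma \ref{L: possible weights} in place of Corollary \ref{C: possible weights}.

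So suppose $A_1,A_2$ are red $(n-1)$-spaces with $A_1\cap A_2=\emptyset$. Since they are red, every $n$-space $B$ through $A_1$ gives a flag $(A_1,B)\in\EKR$, and likewise every $n$-space $B'$ through $A_2$ gives $(A_2,B')\in\EKR$. Because $\EKR$ is a coclique, we only need to find $n$-spaces $B_1\supseteq A_1$ and $B_2\supseteq A_2$ with $A_1\cap B_2=\emptyset$ and $A_2\cap B_1=\emptyset$. Then the flags $(A_1,B_1),(A_2,B_2)\in\EKR$ are opposite, which is the contradiction.

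To construct them, note that $\langle A_1,A_2\rangle$ has dimension $2n-1$, since $A_1$ and $A_2$ are skew $(n-1)$-spaces. Hence it is a hyperplane of $\PG(2n,q)$, and there is a point $Q$ outside it. Put $B_1=\langle A_1,Q\rangle$ and choose a second point $Q'\neq Q$ outside the hyperplane and not in $B_1$, putting $B_2=\langle A_2,Q'\rangle$. We check $A_2\cap B_1=\emptyset$ by a dimension argument. We have $B_1\cap\langle A_1,A_2\rangle=A_1$, since $Q$ lies outside the hyperplane and $B_1$ meets it in an $(n-1)$-space containing $A_1$. So $A_2\cap B_1\subseteq A_2\cap A_1=\emptyset$. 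Symmetrically, $A_1\cap B_2=\emptyset$; here we may even take $Q'=Q$, since each $B_i$ meets the hyperplane exactly in $A_i$. Such a point exists because a hyperplane is a proper subspace.

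The only real subtlety is making sure that "red" really gives all $n$-spaces through $A_i$ as flags of $\EKR$, so that we are free to choose $B_1,B_2$. This holds by definition: $A_i$ lies in exactly $\Gauss{n+1}{1}$ many $n$-spaces, and red means it occurs in $\Gauss{n+1}{1}$ flags of $\EKR$. Everything else is the elementary dimension count above, so I expect no serious obstacle.
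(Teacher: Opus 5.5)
Your proof is correct and is essentially the paper's argument: red means every flag through the space lies in $\EKR$, so two red spaces in bad position give two opposite flags of $\EKR$, and the other half follows by duality. The only difference is that the paper proves the $n$-space half directly (choosing $A_i\subseteq B_i$ to avoid the point $B_1\cap B_2$) and dualizes, whereas you prove the $(n-1)$-space half using the hyperplane $\langle A_1,A_2\rangle$ and a point $Q$ outside it; your detour through a second point $Q'$ is unnecessary, since $Q'=Q$ works, as you yourself note.
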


\begin{proof}
We only show the statement regarding $n$-spaces, the statement for $(n-1)$-spaces follows by duality.

By Lemma \ref{L: possible weights} an $n$-space $B$ is red if and only if every flag $(A',B')\in \EKR$ satisfies $A'\cap B\neq \emptyset$. If $B_1$ and $B_2$ are two $n$-spaces in general position, i.e. they intersect only in a point, then there exist $(n-1)$-spaces $A_1, A_2$ with $A_1\subseteq B_1$ and $A_2\subseteq B_2$, such that  $A_1\cap B_2=A_2\cap B_1=\emptyset$. 
\end{proof}


Next up, we need the EKR and Hilton-Milner Theorem for vector spaces.

\begin{result} (\cite{hiltonmilner_vectorspaces, ekr_vectorspaces}) \label{R: EKR + HM vectorspaces}
Let $\mathcal{C}$ be a set of pairwise intersecting $(n-1)$-spaces of $\PG(2n,q)$, with $n\geq 3$ and $q\geq 3$.
Then either $|\mathcal{C}|\leq \Gauss{2n}{n-1}-q^{n(n-1)}\Gauss{n}{1}$, or there exists a point $P$, such that every $(n-1)$-space is incident with $P$.
\end{result}

\begin{remark} \label{Remark 1}
We remark that $\Gauss{2n}{n-1}-q^{n(n-1)}\Gauss{n}{1}$ is a polynomial in $q$ of degree at most $n^2-2$.
\end{remark}

\begin{cor} \label{C: number of red spaces}
Let $\EKR$ be a maximal coclique of $\Gamma_{2n}$ with $n\geq 3$.
Either the number of red $(n-1)$-spaces in $\PG(2n,q)$ is $\OO(q^{n^2-2})$, or there is a point $P$ and every $(n-1)$-space incident with $P$ is red.\\
Either the number of red $n$-spaces in $\PG(2n,q)$ is $\OO(q^{n^2-2})$, or there is a hyperplane $H$ and every $n$-space incident with $H$ is red.
\end{cor}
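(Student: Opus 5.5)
We treat the statement about red $(n-1)$-spaces; the one about red $n$-spaces then follows by duality, since duality in $\PG(2n,q)$ swaps $(n-1)$-spaces and $n$-spaces, points and hyperplanes, and preserves opposition of flags (hence maps maximal cocliques of $\Gamma_{2n}$ to maximal cocliques). The natural tool is Result~\ref{R: EKR + HM vectorspaces}, applied to the family $\mathcal{C}$ of all $\EKR$-red $(n-1)$-spaces.

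By the preceding lemma, any two red $(n-1)$-spaces meet, so $\mathcal{C}$ is a set of pairwise intersecting $(n-1)$-spaces of $\PG(2n,q)$. For $n\geq 3$ and $q\geq 3$, Result~\ref{R: EKR + HM vectorspaces} leaves two cases. In the first case $|\mathcal{C}|\leq \Gauss{2n}{n-1}-q^{n(n-1)}\Gauss{n}{1}$. By Remark~\ref{Remark 1} this is a polynomial in $q$ of degree at most $n^2-2$, so the number of red $(n-1)$-spaces is $\OO(q^{n^2-2})$. (For $q<3$ the $\OO$-statement is vacuous, since $q$ is fixed.)

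In the second case there is a point $P$ lying on every red $(n-1)$-space, and we must show that every $(n-1)$-space through $P$ is red. We use Corollary~\ref{C: possible weights}: an $(n-1)$-space $A$ occurring in $\EKR$ is red if and only if $B'\cap A\neq\emptyset$ for all $(A',B')\in\EKR$. This criterion requires $A$ to occur in some flag of $\EKR$. However, the proof of Lemma~\ref{L: possible weights}, dualized, shows more: if every $B'$ of a flag in $\EKR$ meets $A$, then maximality of $\EKR$ puts all flags $(A,B)$ into $\EKR$.

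So it suffices to show that every $B'$ occurring in $\EKR$ contains $P$. This is the step I expect to be the main obstacle, because the mere existence of $P$ does not force it. My first attempt would be to read the corollary as saying that, in the non-small case, $\mathcal{C}$ is large, say larger than the Hilton--Milner bound. Then the red spaces through $P$ cover a large portion of the star of $P$. For a flag $(A',B')\in\EKR$ with $P\notin B'$, every red $A$ must meet $B'$, and $A$ meets $B'$ in a point other than $P$. Counting shows that the $(n-1)$-spaces through $P$ that meet a fixed $n$-space $B'\not\ni P$ form only a fraction of order $1/q$ of the star, namely $\OO(q^{n^2-n-1})$ out of $\Theta(q^{n^2-n})$.

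Hence if more than that many red spaces pass through $P$, no such $B'$ exists. Every $B'$ then contains $P$, every $(n-1)$-space through $P$ becomes red by maximality, and we are done.

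The remaining issue is the intermediate range: more than $\OO(q^{n^2-2})$ red spaces but fewer than $c\,q^{n^2-n-1}$. When $n\geq 3$ we have $n^2-n-1\le n^2-2$ (with equality at $n=1$ only), so this range is in fact empty up to constants. The dichotomy therefore reads: either $|\mathcal{C}|=\OO(q^{n^2-2})$, or $|\mathcal{C}|$ is large, lies in the star of $P$ by Result~\ref{R: EKR + HM vectorspaces}, and exceeds the $\OO(q^{n^2-n-1})$ threshold. In the latter case the counting argument above completes the proof.
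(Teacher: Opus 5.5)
Your route is the paper's: apply Result~\ref{R: EKR + HM vectorspaces} to the pairwise intersecting family of red $(n-1)$-spaces to get $P$. You then exclude flags $(A',B')\in\EKR$ with $P\notin B'$ by counting the red spaces through $P$ that meet $B'$, and finish by maximality. Your remark that the last step needs the dualized proof of Lemma~\ref{L: possible weights} is a fair sharpening of the paper's ``by maximality'', since an $(n-1)$-space through $P$ need not yet occur in $\EKR$.

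The counting step, however, is wrong as you state it, and you give no argument for it. The star of $P$ has $\Gauss{2n}{n-1}=\Theta(q^{n^2-1})$ members, not $\Theta(q^{n^2-n})$. Likewise, the $(n-1)$-spaces through $P$ meeting an $n$-space $B'\not\ni P$ are not $\OO(q^{n^2-n-1})$. Each of them contains a line $PQ$ with $Q\in B'$, so there are at most $\Gauss{n+1}{1}\Gauss{2n-1}{n-2}$ of them. This bound has degree $n+(n-2)(n+1)=n^2-2$, and that order is actually attained.

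So there is no slack of $n-1$ in the exponent, and your paragraph on the ``intermediate range'' rests on the wrong exponent. The argument still closes, but only because this count has exactly the same degree $n^2-2$ as the Hilton--Milner bound in Remark~\ref{Remark 1}. Read ``not $\OO(q^{n^2-2})$'' as ``more than $c\,q^{n^2-2}$ red $(n-1)$-spaces''. Choose $c$ so that $c\,q^{n^2-2}$ exceeds both $\Gauss{2n}{n-1}-q^{n(n-1)}\Gauss{n}{1}$ and $\Gauss{n+1}{1}\Gauss{2n-1}{n-2}$. With this correction your proof is the paper's proof.
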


\begin{proof}
    Assume that the number of red $(n-1)$-spaces in $\PG(2n,q)$ is not $\OO(q^{n^2-2})$, then it is in particular $> \Gauss{2n}{n-1}-q^{n(n-1)}\Gauss{n}{1}$ for $n\geq 3$. \Cref{R: EKR + HM vectorspaces} and \Cref{Remark 1} imply that there exists a point $P$, such that every red $(n-1)$-space is incident with $P$. 

    By \Cref{C: possible weights}, for every flag $(A,B)\in \EKR$, we have that it's $n$-space meets all red $(n-1)$-spaces.
    Let $B$ be an $n$-space that intersects all red $(n-1)$-spaces with $P\notin B$. The number of lines that meet $P$ and $B$ is $\Gauss{n+1}{1}$, as this is the number of points in $B$. The number of $(n-1)$-subspaces incident with a line is $\Gauss{2n-1}{n-2}$. Therefore, there are at most $\Gauss{n+1}{1}\cdot \Gauss{2n-1}{n-2}$ red $(n-1)$-spaces in this case and this number is $\OO(q^n)\cdot \OO(q^{(n-2)(n+1)})=\OO(q^{n^2-2})$, a contradiction.

Hence, for all flags $(A,B)\in \EKR$, it's $n$-space meets $P$. By maximality of $\EKR$, every $(n-1)$-space incident with $P$ is red. For $n$-spaces the statement follows by duality.
\end{proof}

\section{Many red subspaces}

Let $\EKR$ be a maximal coclique of $\Gamma_{2n}$.
In this section we focus on the case in which the number of red $n$-spaces is not $\OO(q^{n^2-2})$. Our goal is to show that in this case $\EKR$ is as described in Example \ref{E: main}. The arguments clearly work in an analogue way for $(n-1)$-spaces, we leave dualization to the reader.

Before we can accomplish our goal, we need to collect some results. In particular, we need the EKR and Hilton-Milner Theorem for $(n-1)$-spaces of $\PG(2n-1,q)$. 

\begin{result} \label{R: EKR for n=2k}
  Let $\mathcal{C}$ be a set of pairwise intersecting $(n-1)$-spaces of $\PG(2n-1,q)$. 
  \begin{itemize}
      \item[(a)] (Theorem 5.3.1 in \cite{Newman_PhD_Thesis}) Assume that $\mathcal{C}$ is as large as possible. Then there exists a point or hyperplane in $\PG(2n-1,q)$ that is incident with all $(n-1)$-spaces. In particular, $|\mathcal{C}|=\Gauss{2n-1}{n-1}$.
      \item[(b)]Now assume that $\mathcal{C}$ is a set of pairwise intersecting $(n-1)$-spaces of $\PG(2n-1,q)$, such that that there is no point or hyperplane in $\PG(2n-1,q)$ that is incident with all $(n-1)$-spaces.
          \begin{itemize}
\item[(a)] (Theorem 6 in \cite{Ihringer_Hilton_Milner}) Let $q\geq 4$ and $n\geq 4$. Then $|\mathcal{C}|<3\Gauss{n}{1}\Gauss{2n-2}{n-2}$.
\item[(b)] (Theorem 6.1 in \cite{k=3_Hilton_Milner})
  Let $n=3$. Then $|\mathcal{C}|\leq q^5+2q^4+3q^3+2q^2+q+1$.
\end{itemize}
  \end{itemize}
\end{result}

We recall the following definition 
\begin{align*}
f(n,q):=\left\{\begin{array}{ll}
3\Gauss{n}{1}\Gauss{2n-2}{n-2} & \text{if $n\geq 4$ and $q\geq 4$,}
\\
q^5+2q^4+3q^3+2q^2+q+1 &  \text{if $n=3$,}
\end{array}
\right.
\end{align*}

\begin{lemma}
    Let $\EKR$ be a maximal coclique of $\Gamma_{2n}$ with $n\geq 3$ in which the number of red $n$-spaces is not $\OO(q^{n^2-2})$. Then $\EKR$ is as in Example \ref{E: main} (a), or $|\EKR|\leq  \Gauss{2n}{n-1} \Gauss{n+1}{1} +f(n,q)\cdot q^n $ and there is a hyperplane $H$, such that $\EKR$ contains all flags $(A,B)$ with $B\subseteq H$.
\end{lemma}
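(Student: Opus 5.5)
By Corollary~\ref{C: number of red spaces} there is a hyperplane $H$ such that every $n$-space contained in $H$ is red. Hence all flags $(A,B)$ with $B\subseteq H$ lie in $\EKR$. Counting them gives the main term $\Gauss{2n}{n+1}\Gauss{n+1}{1}=\Gauss{2n}{n-1}\Gauss{n+1}{1}$. It remains to control the flags $(A,B)\in\EKR$ with $B\not\subseteq H$.

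For such a flag, $A$ must meet every red $n$-space, in particular every $n$-space of $H$. If $A\not\subseteq H$, then $A\cap H$ is an $(n-2)$-space of $H\cong\PG(2n-1,q)$, and there are $n$-spaces of $H$ skew to it. Then $(A,B)$ would be opposite to a flag $(A',B')$ with $B'$ such an $n$-space and $A'\subseteq B'$ chosen with $A'\cap B=\emptyset$. Such an $A'$ exists, since $B\cap B'\subseteq B\cap H$ is a proper subspace of $B'$ and $B'$ contains hyperplanes missing any given proper subspace of dimension $\le n-1$. I still need to check this dimension count; it should hold because $B'\cap B$ has dimension at most $n-1$. So $A\subseteq H$ and $B\cap H=A$.

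Next, two such flags $(A_1,B_1),(A_2,B_2)$ with $B_i\not\subseteq H$ must satisfy $A_1\cap A_2\neq\emptyset$. Suppose $A_1\cap A_2=\emptyset$. Since $B_i\cap H=A_i$, we get $A_1\cap B_2=A_1\cap A_2=\emptyset$, so the flags would be opposite. Hence the set $\mathcal C=\{A\subseteq H : (A,B)\in\EKR \text{ for some } B\not\subseteq H\}$ is a family of pairwise intersecting $(n-1)$-spaces of $H\cong\PG(2n-1,q)$. Each $A\in\mathcal C$ lies in exactly $q^n$ $n$-spaces not contained in $H$, so the number of flags of $\EKR$ outside $H$ is at most $|\mathcal C|\,q^n$.

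Now I apply Result~\ref{R: EKR for n=2k}. If $\mathcal C$ is not contained in a point-pencil or hyperplane-pencil of $H$, then $|\mathcal C|\le f(n,q)$. Together with the main term this gives $|\EKR|\le\Gauss{2n}{n-1}\Gauss{n+1}{1}+f(n,q)q^n$, which is the second alternative.

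Otherwise all elements of $\mathcal C$ pass through a point $X\in H$ or lie in a $(2n-2)$-space $X\subseteq H$. Then $\EKR$ is contained in the corresponding set of Example~\ref{E: main}(a). That set is a coclique, so by maximality $\EKR$ equals it.

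The main obstacle is the step showing $A\subseteq H$, i.e.\ verifying that an $(n-1)$-space not in $H$ is skew to some $n$-space of $H$ and that a suitable $A'$ skew to $B$ exists. This is a routine general-position dimension count, but the choice of $B'$ must be made carefully relative to $B$. I would first choose $B'\subseteq H$ skew to the $(n-2)$-space $A\cap H$ and then pick $A'\subseteq B'$ avoiding the subspace $B\cap B'$.
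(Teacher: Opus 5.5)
Your proposal is correct and follows the paper's proof step for step: the red hyperplane $H$ from Corollary~\ref{C: number of red spaces}, then $A\subseteq H$, then pairwise intersection of the $A$'s via $B_i\cap H=A_i$, then Result~\ref{R: EKR for n=2k} in $H\cong\PG(2n-1,q)$. Your maximality argument just spells out the paper's final identification with Example~\ref{E: main}(a).

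The step you call the main obstacle is immediate from Lemma~\ref{L: possible weights}, as your own first sentence there says, so the explicit construction of $(A',B')$ can be dropped. Its stated justification is also false in general: a hyperplane of $B'$ misses a subspace of $B'$ only if that subspace is at most a point. It works here only because $B'$ is skew to $A\cap H$, which is a hyperplane of the $(n-1)$-space $B\cap H$, so $B\cap B'$ is at most a point.
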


\begin{proof}
    By \Cref{C: number of red spaces}, we know that there is a hyperplane $H$ such that every $n$-space incident with $H$ is red. By \Cref{L: possible weights}, we have for any flag $(A,B)$ in $\EKR$ that $A$ intersects all the $n$-spaces in $H$. This is only possible if $A\subseteq H$. The number of flags $(A,B)$ with $A,B\subseteq H$ is $\Gauss{2n}{n+1} \Gauss{n+1}{n}=\Gauss{2n}{n-1} \Gauss{n+1}{1}$.

    Let $(A_1,B_1)$ and $(A_2,B_2)$ be flags of $\EKR$ with $B_1,B_2$ not red, i.e. $B_1,B_2$ are not contained in $H$. If $A_1$ and $A_2$ are skew, then $B_1$ is skew to $A_2$, as $B_1$ intersects $H$ only in $A_1$. Similarly $B_2$ is skew to $A_1$. So if $A_1$ and $A_2$ are skew, then the flags $(A_1,B_1)$ and $(A_2,B_2)$ are opposite, which is a contradiction. 

    In conclusion the flags $(A,B)$ in $\EKR$ with $B\not\subseteq H$ have $(n-1)$-spaces that pairwise intersect in $H$. By \Cref{R: EKR for n=2k} there are two options. Either there is a point or $(2n-2)$-space in $H$ that is incident with $A$ for any such flag $(A,B)$, or there are at most $f(n,q)$ distinct $(n-1)$-subspaces $A$ in these flags. 
    If there is a point or $(2n-2)$-space in $H$ that is incident with $A$ for any flag $(A,B)$ with $B\not\subseteq H$, then $\EKR$ is as in Example \ref{E: main} (a).
    If no such point or $(2n-2)$-space exists, then we have $ \Gauss{2n}{n-1} \Gauss{n+1}{1}$ flags $(A,B)$ with $A,B\subseteq H$ and since any $(n-1)$-space in $H$ is incident with $q^n$ many $n$-spaces that are not contained in $H$, we have at most $q^n\cdot f(n,q)$ many flags $(A,B)$ with $B\not\subseteq H$. In conclusion, we have at most $\Gauss{2n}{n-1} \Gauss{n+1}{1} +f(n,q)\cdot q^n$ flags in this case.
\end{proof}


\section{Few red subspaces}

Let $\EKR$ be a maximal coclique of $\Gamma_{2n}$.
Our goal in this section is to show that $|\EKR|$ is $\OO(q^{n^2+n-2})$, if the number of red spaces is $\OO(q^{n^2-2})$.

First up, we want to show that we must have many yellow $(n-1)$-spaces that are pairwise skew. We need the Erd\H{o}s-Matching Theorem for vector spaces.

\begin{result} (\cite{Ihringer_EM}) \label{R: EM for vector spaces}
Let $n\geq 3$ and let $\mathcal{C}$ be a family of $(n-1)$-spaces in $\PG(2n,q)$, such that $\mathcal{C}$ does not contain $s+1$ pairwise skew $(n-1)$-spaces. Then $|\mathcal{C}|\leq s \Gauss{2n}{n-1}$, for $q$ large enough compared to $n$.
\end{result}

\begin{lemma} \label{L: many yellow pw skew}
     Let $\EKR$ be a maximal coclique of $\Gamma_{2n}$ with $n\geq 3$ and assume that the number of red spaces is $\OO(q^{n^2-2})$. Then $|\EKR|$ is $\OO(q^{n^2+n-2})$, or there are at least $n+1$ yellow $(n-1)$-spaces in $\EKR$ that are pairwise skew, or there are at least $n+1$ yellow $n$-spaces in general position.
\end{lemma}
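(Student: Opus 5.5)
The plan is to bound $|\EKR|$ by counting flags according to the colour of their $(n-1)$-space and their $n$-space, and then to apply \Cref{R: EM for vector spaces} to the yellow $(n-1)$-spaces and, dually, to the yellow $n$-spaces.

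Every flag $(A,B)\in\EKR$ is of one of three kinds: $A$ is red, or $B$ is red, or both $A$ and $B$ are yellow. I bound each kind separately.

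\emph{Flags with a red space.} By assumption there are $\OO(q^{n^2-2})$ red spaces.
\begin{itemize}
\item A red $(n-1)$-space lies in $\Gauss{n+1}{1}=\OO(q^n)$ flags, so these flags contribute $\OO(q^{n^2+n-2})$.
\item A red $n$-space contains $\Gauss{n+1}{1}$ many $(n-1)$-spaces, so the same bound holds for flags with a red $n$-space.
\end{itemize}

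\emph{Flags with both spaces yellow.} Suppose there are no $n+1$ pairwise skew yellow $(n-1)$-spaces. Then \Cref{R: EM for vector spaces} with $s=n$ shows there are at most $n\Gauss{2n}{n-1}=\OO(q^{n^2-1})$ yellow $(n-1)$-spaces. By \Cref{C: possible weights}, each yellow $(n-1)$-space lies in at most $\Gauss{n}{1}=\OO(q^{n-1})$ flags of $\EKR$. Hence the flags with yellow $(n-1)$-space number $\OO(q^{n^2-1}\cdot q^{n-1})=\OO(q^{n^2+n-2})$.

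This case alone already suffices: the other kinds were bounded above, so $|\EKR|$ is $\OO(q^{n^2+n-2})$. The dual statement for yellow $n$-spaces in general position is therefore not even needed. I would still note that it follows symmetrically: dualising \Cref{R: EM for vector spaces} bounds a family of $n$-spaces of $\PG(2n,q)$ without $n+1$ members in general position by $n\Gauss{2n}{n}$, and \Cref{L: possible weights} bounds the weight of a yellow $n$-space by $\Gauss{n}{1}$.

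In summary, either $\EKR$ has $n+1$ pairwise skew yellow $(n-1)$-spaces, or $|\EKR|=\OO(q^{n^2+n-2})$, which is stronger than the lemma requires.

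The only delicate point is the degree bookkeeping. The crude count $n\Gauss{2n}{n-1}\cdot\Gauss{n+1}{1}$ would only give $\OO(q^{n^2+n-1})$, which is one power of $q$ too weak. The saving comes from using that a yellow space has weight at most $\Gauss{n}{1}$, not $\Gauss{n+1}{1}$. The Erd\H{o}s--Matching bound is applied to the set of yellow $(n-1)$-spaces with $s=n$, which is legitimate for $q$ large compared to $n$.
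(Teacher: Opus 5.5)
Your argument is correct and is essentially the paper's proof. The flags through red spaces contribute $\OO(q^{n^2-2})\Gauss{n+1}{1}=\OO(q^{n^2+n-2})$, and the Erd\H{o}s--Matching bound with $s=n$, combined with the yellow weight bound $\Gauss{n}{1}$, controls the rest; your observation that the $(n-1)$-space side alone suffices is a harmless sharpening of the paper's appeal to duality.

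One slip in your side remark, which your proof does not use: dualising the Erd\H{o}s--Matching bound gives $n\Gauss{2n}{n+1}=n\Gauss{2n}{n-1}$, not $n\Gauss{2n}{n}$. The latter has degree $n^2$ and would be too weak for the dual count.
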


\begin{proof}
    The number of red spaces is $\OO(q^{n^2-2})$ and a red space is contained in $\Gauss{n+1}{1}$ flags of $\EKR$, hence the number of flags in $\EKR$ that contain a red space is $\OO(q^{n^2-2})\cdot \Gauss{n+1}{1}=\OO(q^{n^2+n-2})$. 

    Let us assume that $|\EKR|$ is not $\OO(q^{n^2+n-2})$. 
    As a yellow space is contained in at most $\Gauss{n}{1}$ flags of $\EKR$, the number of yellow spaces cannot be $\OO(q^{n^2-1})$. If there are at most $n$ yellow $(n-1)$-spaces in $\EKR$ that are pairwise skew, \Cref{R: EM for vector spaces} implies that the number of yellow $(n-1)$-spaces is at most $n \Gauss{2n}{n-1}$ which is $\mathcal{O}(q^{n^2-1})$. For yellow $n$-spaces the result follows by duality.
\end{proof}

Now, we show that there are few $n$-spaces that intersect $n+1$ pairwise skew $(n-1)$-spaces.

\begin{lemma} \label{L: few that meet all}
 Let $A_1,\ldots,A_{n+1}$ be pairwise skew $(n-1)$-spaces of $\PG(2n,q)$. The number of $n$-spaces that intersect every $A_i$ for $1\leq i\leq n+1$ is $\OO(q^{n^2-1})$.
\end{lemma}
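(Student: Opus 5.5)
The plan is to count the $n$-spaces $B$ meeting all of $A_1,\ldots,A_{n+1}$ through a spanning configuration of chosen intersection points. The crude bound is hopeless: there are only $\OO(q^{n^2+n})$ $n$-spaces in total, and we must save a factor $q^{n+1}$.

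\textbf{Step 1: the greedy set $S$.} Fix such a $B$ and build a subspace $S\subseteq B$ greedily.
\begin{enumerate}[(i)]
\item Pick a point $P_1\in B\cap A_1$ and put $S_1=P_1$.
\item Run through $j=2,\ldots,n+1$. If $A_j$ already meets the current $S$, do nothing. Otherwise pick $P_j\in B\cap A_j$ and replace $S$ by $\langle S,P_j\rangle$; the dimension then grows by one, since $P_j\notin S$.
\end{enumerate}
At the end we have a subspace $S\subseteq B$ of some dimension $s\le n$. It is spanned by $s+1$ independent points taken from $s+1$ distinct $A_i$'s, and every $A_j$ meets $S$.

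\textbf{Step 2: reduce to counting such $S$.} The number of $B$ is at most the sum over $s$ and over all such $S$ of the number $\Gauss{2n-s}{n-s}=\OO(q^{n(n-s)})$ of $n$-spaces through $S$. So it suffices, for each $s$, to bound the number $N_s$ of $s$-spaces $S$ with these two properties:
\begin{itemize}
\item $S$ is spanned by points of $s+1$ of the $A_i$;
\item $S$ meets all $n+1$ of the $A_i$.
\end{itemize}

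\textbf{Step 3: the case $s=n$.} Then $S=B$ is spanned by one point from each $A_i$. There are at most $\Gauss{n}{1}^{n+1}=\OO(q^{(n-1)(n+1)})=\OO(q^{n^2-1})$ choices, which is already the claimed bound.

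\textbf{Step 4: the case $s<n$ (main obstacle).} Now the $t:=n-s\geq 1$ indices not used for spanning give genuine incidence conditions, and these must pay for the extra freedom $q^{nt}$ in choosing $B$.

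The heuristic is as follows. In $\PG(2n,q)$ an $(n-1)$-space meets a given $s$-space in "expected codimension" $2n-(n-1)-s=n+1-s=t+1$. So each of the $t$ remaining $A_j$ should cost a factor $q^{t+1}$, giving
\[
N_s=\OO\bigl(q^{(n-1)(s+1)-t(t+1)}\bigr).
\]
Then the total exponent is $(n-1)(n+1-t)+nt-t(t+1)=n^2-1-t^2+t\le n^2-1$, as required.

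To make this rigorous I would not use genericity, since the $A_j$ are special. Instead I would count flags. For each remaining index $j$, choose a point $Q_j\in A_j$, at cost $\OO(q^{n-1})$. Then count $s$-spaces through the points $Q_j$ that are spanned by points of the prescribed $A_i$'s.

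Equivalently, I would re-choose the spanning set so that it includes points of every $A_j$. Take a maximal independent subset of $\{P_1,\ldots,P_{n+1}\}$ after choosing one point of $B\cap A_j$ for every $j$. The $n+1$ points then satisfy $n-s$ linear dependencies, and each dependency forces some point $P_j$ into the span of the previous ones. That span is an $r$-space with $r\le s$, and by pairwise skewness it meets $A_j$ in dimension at most $r+(n-1)-(2n-1)$ whenever the previous points lie in the span of at least two of the skew $A_i$'s.

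Bounding these intersection dimensions carefully, using that two skew $(n-1)$-spaces span a $(2n-1)$-space and so on, is where I expect the real work to be. The alternative is a cruder but sufficient bound, where each dependency saves at least $q^{n}$ rather than $q^{t+1}$; that still suffices because $nt\ge t^2$.
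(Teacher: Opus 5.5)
\textbf{There is a genuine gap: Step 4 is not proved, and the ingredients you propose for it are wrong.} Steps 1--3 are fine, and Step 3 is exactly the paper's Case 2. But Step 4 is the whole content of the lemma.

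\emph{The per-condition savings you claim do not hold in general.} The generic saving of $q^{t+1}$ per unused index fails in special position. Suppose all $A_i$ lie in a common hyperplane $H$; this is possible, since $\PG(2n-1,q)$ has $(n-1)$-spreads. Take $s=n-1$ and generic $P_1,\dots,P_{n-1}$. Then the admissible $P_n$ form $A_n\cap\langle P_1,\dots,P_{n-1},A_{n+1}\rangle$. This is the intersection of an $(n-1)$-space and a $(2n-2)$-space inside the $(2n-1)$-space $H$, hence an $(n-2)$-space. So this condition saves only a factor $q$, not $q^2$. The same example refutes the claimed saving of $q^n$ per dependency. Your bound $r+(n-1)-(2n-1)=r-n<0$ would say the span never meets $A_j$ at all, so it cannot be right either. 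A minor slip as well: $(n-1)(n+1-t)+nt-t(t+1)=n^2-1-t^2$, not $n^2-1-t^2+t$.

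\emph{What is missing is that you need much less than you are aiming for.} The trivial count $q^{(n-1)(s+1)}\cdot q^{n(n-s)}=q^{n^2-1+t}$ overshoots by only $q^t$, and a single incidence condition pays for it.
\begin{enumerate}
\item Let $j$ be the first index your greedy procedure skips. Then $P_1,\dots,P_{r+1}$ with $r\le s$ were taken from $A_1,\dots,A_{r+1}$, and $\langle P_1,\dots,P_{r+1}\rangle$ meets $A_j$.
\item Take $m$ minimal such that $\langle P_1,\dots,P_{m+1}\rangle$ meets $A_j$. Then $1\le m\le r$. This choice also covers the case that an earlier partial span already meets $A_j$.
\item The space $T=\langle P_1,\dots,P_m\rangle$ is skew to $A_j$. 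So $\langle T,A_j\rangle$ is an $(n+m-1)$-space containing $A_j$ with codimension $m$.
\item Since $A_{m+1}$ is skew to $A_j$, it meets $\langle T,A_j\rangle$ in dimension at most $m-1$.
\item $P_{m+1}$ must lie in that intersection. Indeed, $\langle T,P_{m+1}\rangle$ meets $A_j$ in a point $X\notin T$, so $P_{m+1}\in\langle T,X\rangle\subseteq\langle T,A_j\rangle$.
\item Hence $P_{m+1}$ has $\OO(q^{m-1})$ choices instead of $\OO(q^{n-1})$. This saves $q^{n-m}\ge q^{n-s}=q^t$, and the total is $\OO(q^{n^2-1})$.
\end{enumerate}
This is essentially the paper's Case 1, which stops at this first index and counts the $n$-spaces through $\langle P_1,\dots,P_i\rangle$.
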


\begin{proof}
     Let $B$ be an $n$-space that meets $A_1,\ldots,A_{n+1}$. 
     Then $B\cap A_i$ has to be some subspace $P_i$ for all $1\leq i\leq n+1$.
First, we will count subspaces $P_i$ in $A_i$ (for $i$ in a subset of $\{1,\ldots,n+1\}$) and then we will consider all $n$-spaces incident with the span of the $P_i$. This gives an upper bound on the number of $n$-spaces that meet $A_1,\ldots,A_{n+1}$.
As a first step, we can choose dimensions $s_1,\ldots,s_{n+1}$, where $s_i=dim(P_i)$. As $0\leq s_i\leq n-1$, there are $n$ options for every $i$, so in total at most $n^{n+1}$ options. From now on, we assume that $s_1,\ldots,s_{n+1}$ are fixed.

We distinguish two cases: Either we choose $P_1,\ldots,P_{n+1}$ in such a way that there is an $i$, such that the span $\langle P_1,\ldots,P_{i'} \rangle$ meets $A_{i'+1}$, or we choose $P_1,\ldots,P_{n+1}$ such that there is no such $i'$.

\textbf{\underline{Case 1:}} We choose $P_1,\ldots,P_{n+1}$, such that there exists an $i'\in \{2,\ldots,n\}$, with the property that the span $\langle P_1,\ldots,P_{i'} \rangle$ meets $A_{i'+1}$. Let $i$ be the smallest integer with this property.\\

This implies that $\langle P_1,\ldots,P_{i-1} \rangle$ does not meet $A_i$, hence $\langle P_1,\ldots,P_{i-1} \rangle$ is skew to $P_i$. Furthermore, $\langle P_1,\ldots,P_{i-2} \rangle$ does not meet $A_{i-1}$, hence $\langle P_1,\ldots,P_{i-2} \rangle$ is skew to $P_{i-1}$, so $\langle P_1,\ldots,P_{i-2} \rangle$, $P_{i-1}$ and $P_i$ are pairwise skew. Using the same argument $i-1$ times yields that $P_1,\ldots,P_i$ are pairwise skew.

In particular, we have
\begin{align} \label{A: dim P_1,...,P_i}
\begin{split}
    dim(\langle P_1,\ldots,P_{i} \rangle)&=dim( P_1)+\ldots+dim(P_i)+(i-1)\\
    &= s_1+\ldots+s_i+(i-1)   
\end{split}
\end{align}
and
\begin{align} \label{A: dim P_1,...,P_i-1}
    dim(\langle P_1,\ldots,P_{i-1} \rangle)=s_1+\ldots+s_{i-1}+(i-2). 
\end{align}
For any $j\leq i-1$ the number of $s_j$-subspaces in $A_j$ is $\Gauss{n}{s_j+1}$.
 As $P_1,\ldots,P_{i-1}$ are pairwise skew, the number of distinct subspaces $\langle P_1,\ldots,P_{i-1} \rangle$ is bound by the number of tuples $( P_1,\ldots,P_{i-1})$. So, the number of subspaces $\langle P_1,\ldots,P_{i-1} \rangle$ is at most
 $\Gauss{n}{s_1+1} \ldots   \Gauss{n}{s_{i-1}+1}$.

For any subspace $P_i$ skew to $\langle P_1,\ldots,P_{i-1} \rangle$, we have that $\langle P_1,\ldots,P_{i-1},P_i \rangle=\langle \langle P_1,\ldots,P_{i-1} \rangle,P_i \rangle$ consists of all points that are on a line $g$ that intersects $P_i$ and $\langle P_1,\ldots,P_{i-1} \rangle$.\\
 Let $Q_1$ be a point in  $\langle P_1,\ldots,P_{i-1} \rangle$. As $Q_1$ is in  $\langle P_1,\ldots,P_{i-1} \rangle$, which is skew to the $(n-1)$-space $A_i$, the span $S$ of $Q_1$ and $A_i$ has dimension $n$. If we assume that $S$ meets $A_{i+1}$ in at least a line, then $A_i$ (which has codimension $1$ in $S$) would meet $A_{i+1}$, which is a contradiction. 
 So, $S$ meets $A_{i+1}$ in at most a point $Q_3$. If such a point $Q_3$ exists, then the line $Q_1Q_3$ intersects $P_i$ in a point.
In other words: For every point $Q_1$ in $\langle P_1,\ldots,P_{i-1} \rangle$ there is at most one point $Q_2$ in $A_i$ with the property that $Q_1Q_2$ intersects $A_{i+1}$.
 Recall from (\ref{A: dim P_1,...,P_i-1}) that $\langle P_1,\ldots,P_{i-1}\rangle$ has dimension $s_1+\ldots+s_{i-1}+(i-2)$.
 Therefore, $\langle P_1,\ldots,P_{i-1}\rangle$ contains exactly $\Gauss{s_1+\ldots+s_{i-1}+ (i-1)}{1}$ many points $Q_1$.
Hence, the number of points $Q_2$ in $A_i$, with the property that there is a $Q_1$ in $\langle P_1,\ldots,P_{i-1}\rangle$ such that $Q_1Q_2$ intersects $A_{i+1}$, is at most  $\Gauss{s_1+\ldots+s_{i-1}+ (i-1)}{1}$. \\
As $\langle P_1,\ldots,P_{i} \rangle$ meets $A_{i+1}$, we have that $P_i$ is an $s_i$-subspaces in $A_i$ that is incident with one such point $Q_2$. The number of $s_i$-subspaces in $A_i$ incident with a fixed point is $\Gauss{n-1}{s_i}$.
 So, if $P_1,\ldots,P_{i-1}$ are fixed, there are at most $\Gauss{s_1+\ldots+s_{i-1}+ (i-1)}{1}\Gauss{n-1}{s_i}$ options for $P_i$. \\ 
Recall from (\ref{A: dim P_1,...,P_i}) that $\langle P_1,\ldots,P_{i} \rangle$ has dimension $(s_1+\ldots+s_i+i-1)$.
Therefore, the number of $n$-spaces $B$ that contain  $\langle P_1,\ldots,P_{i} \rangle$ is at most
 $\Gauss{2n-(s_1+\ldots+s_i+i-1)}{n-(s_1+\ldots+s_i+i-1)}$.  
Any $n$-space that contains $\langle P_1,\ldots,P_{n+1} \rangle$, must also contain $\langle P_1,\ldots,P_{i} \rangle$. Hence, we have at most
    \begin{align*}
        &\Gauss{n}{s_1+1} \ldots   \Gauss{n}{s_{i-1}+1}  \cdot \Gauss{s_1+\ldots+s_{i-1}+i-1}{1}\Gauss{n-1}{s_i}   \cdot\Gauss{2n-(s_1+\ldots+s_i+i-1)}{n-(s_1+\ldots+s_i+i-1)}
    \end{align*}
   many $n$-subspaces $B$ in Case 1. This number is a $q$-polynomial of degree at most
\begin{align*}
    &\left( \sum\limits_{j=1}^{i-1}(s_j+1)(n-s_j-1) \right)
    +(i-2+\sum\limits_{j=1}^{i-1}s_j)+s_i(n-1-s_i)+(n-i+1-\sum\limits_{j=1}^{i}s_j)n\\
    &=n^2 -\sum\limits_{j=1}^{i}(s_j^2+s_j) - 1\leq n^2-1.
\end{align*}

This implies that in this case the number in question is $\OO(q^{n^2-1})$.\\

\textbf{\underline{Case 2:}} We choose $P_1,\ldots,P_{n+1}$, such that there exists  no $i'\in \{2,\ldots,n\}$, with the property that the span $\langle P_1,\ldots,P_{i'} \rangle$ meets $A_{i'+1}$. \\

We start with the same argument as in Case 1. 
The span $\langle P_1,\ldots,P_{n} \rangle$ does not meet $A_{n+1}$, hence $\langle P_1,\ldots,P_{n} \rangle$ is skew to $P_{n+1}$. 
Furthermore, $\langle P_1,\ldots,P_{n-1} \rangle$ does not meet $A_{n}$, hence $\langle P_1,\ldots,P_{n-1} \rangle$ is skew to $P_{n}$, so $\langle P_1,\ldots,P_{n-1} \rangle$, $P_{n}$ and $P_{n+1}$ are pairwise skew. 
Using the same argument $n$ times yields that $P_1,\ldots,P_{n+1}$ are pairwise skew. Hence, 
$\langle P_1,\ldots,P_{n+1}\rangle$ has dimension $s_1+\ldots+s_{n+1}+n$.\\
Let $B$ be an $n$-space that contains $P_1,\ldots,P_{n+1}$.
As $B$ has dimension $n$, we have $s_1+\ldots+s_{n+1}+n\leq n$, hence $s_i=0$ for all $1\leq i\leq n+1$ and every subspace $P_i$ is a point. In particular, $B$ is the span of all points $P_i$. There are $\Gauss{n}{1}$ points in every $(n-1)$-space $A_i$, hence there are at most $\Gauss{n}{1}^{n+1}$ many $n$-spaces $B$ that are equal to $\langle P_1,\ldots,P_{n+1} \rangle$ and this number is $\mathcal{O}(q^{n^2-1})$.\\

In conclusion we see that in both cases the number in question is $\OO(q^{n^2-1})$.
\end{proof}

Next up, we show that a singe yellow $n$-space must meet almost all $(n-1)$-spaces of flags in $\EKR$, if $\EKR$ is large.

\begin{lemma} \label{L: yellows meet almost all}
    Let $\EKR$ be a maximal coclique of $\Gamma_{2n}$ with $n\geq 3$ and let $B$ be a yellow $n$-space of $\PG(2n,q)$. The number of flags $(A',B')\in \EKR$ with pairwise distinct $n$-spaces and with $A'\cap B=\emptyset$ is $\OO(q^{n^2-1})$.
\end{lemma}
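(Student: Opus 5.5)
The plan is to group the flags in question according to the point in which their $n$-space meets $B$. Inside each group, the condition that no two flags of $\EKR$ are opposite turns into a pairwise intersecting family of $(n-1)$-spaces in a quotient $\PG(2n-1,q)$, which \Cref{R: EKR for n=2k} bounds.

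\textbf{Step 1: one fixed flag on $B$.} Since $B$ is yellow, $B$ occurs in at least one flag of $\EKR$; fix such a flag $(A,B)\in\EKR$. Now let $(A',B')\in\EKR$ with $A'\cap B=\emptyset$. The two flags are not opposite, and $A'\cap B=\emptyset$, so necessarily $A\cap B'\neq\emptyset$.

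\textbf{Step 2: $B'$ meets $B$ in a single point of $A$.} Two $n$-spaces of $\PG(2n,q)$ always meet. Moreover $A'$ is a hyperplane of $B'$ that is skew to $B$, so $B'\cap B$ has dimension at most $0$, i.e. it is exactly one point $P$. Since $B'$ meets $A\subseteq B$, this point lies in $A$. Hence the $n$-spaces $B'$ fall into at most $\Gauss{n}{1}$ classes, one for each point $P\in A$, and the class of $P$ consists of those $B'$ with $B'\cap B=\{P\}$.

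\textbf{Step 3: the bound inside one class.} Fix $P$ and take two such flags $(A'_1,B'_1),(A'_2,B'_2)\in\EKR$ in the class of $P$ with $B'_1\neq B'_2$. Note that $P\notin A'_i$, because $P\in B$ and $A'_i$ is skew to $B$. Since the flags are not opposite, we may assume $A'_1\cap B'_2\neq\emptyset$. Then $B'_1\cap B'_2$ contains both $P$ and a point of $A'_1$, which differs from $P$, so $B'_1\cap B'_2$ contains a line through $P$.

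Now project from $P$ into the quotient space $\PG(2n-1,q)$. Each $B'_i$ becomes an $(n-1)$-space, and two of these images intersect exactly when $B'_1\cap B'_2$ is larger than $P$. By the previous paragraph this always happens, so the images of the distinct $B'$ form a pairwise intersecting family of $(n-1)$-spaces of $\PG(2n-1,q)$. The projection is injective on $n$-spaces through $P$, so \Cref{R: EKR for n=2k}(a) gives at most $\Gauss{2n-1}{n-1}$ distinct $B'$ in the class of $P$. This quantity is a $q$-polynomial of degree $n(n-1)$.

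\textbf{Step 4: summing.} Summing over the at most $\Gauss{n}{1}=\OO(q^{n-1})$ points $P\in A$ gives
\[
\OO(q^{n-1})\cdot\OO(q^{n^2-n})=\OO(q^{n^2-1})
\]
flags with pairwise distinct $n$-spaces, which is the claim.

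I expect Step 3 to be the main obstacle. The naive estimate, counting all $n$-spaces that meet $A$, only gives $\OO(q^{n^2+n-1})$. The saving has to come from the non-opposition of the flags $(A',B')$ among themselves, and it matters that the intersection point $P$ is the same for the whole class: that is what makes the quotient at $P$ turn non-opposition into pairwise intersection. If this reduction failed, a fallback would be to use Lemma \ref{L: possible weights}, which forces all the points $B'\cap B$ to span a subspace $S$ of dimension at most $n-1$, but this alone does not give the required degree.
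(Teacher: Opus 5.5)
Your proposal is correct and follows the paper's route. You fix $(A,B)\in\EKR$ and place each relevant $B'$ at a single point $P\in A$. Non-opposition then forces the $n$-spaces with a common $P$ to pairwise meet in a line, and you bound each class by $\Gauss{2n-1}{n-1}$ in the quotient at $P$ before summing over the $\Gauss{n}{1}$ points of $A$. Your direct use of $A'_1\cap B'_2\neq\emptyset$ is just the contrapositive of the paper's remark that $B'_1\cap B'_2=P$ would make the two flags opposite.
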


\begin{proof}
    Since $B$ is a yellow $n$-space there exists an $(n-1)$-space $A$, such that $(A,B)\in \EKR$. Therefore, any flag $(A',B')\in \EKR$ with $A'\cap B=\emptyset$ has to satisfy that the point $B'\cap B$ is in  $A$. 

    Consider two flags $(A'_1,B'_1),(A'_2,B'_2)\in \EKR$ with $A'_1\cap B=\emptyset$ and $A'_2\cap B=\emptyset$ and assume $B'_1\cap B=B'_2\cap B=P$.
    If $B'_1\cap B'_2=P$, then $A'_1\cap B'_2=\emptyset=A'_2\cap B'_1$ as $A'_1$ and $A'_2$ are skew to $B$. Then the flags $(A'_1,B'_1),(A'_2,B'_2)$ are opposite, which is a contradiction. Therefore, the $n$-spaces $B'_1$ and $B'_2$ have to meet in at least a line containing $P$.
    In the quotient space of $P$, which has dimension $2n-1$, this means that the $(n-1)$-spaces corresponding to $B'_1$ and $B'_2$ have to intersect. By \Cref{R: EKR for n=2k}, there are at most $\Gauss{2n-1}{n-1}$ such spaces.

    In conclusion, there are $\Gauss{n}{1}$ points in $A$ and for every point $P$ in $A$ there are at most $\Gauss{2n-1}{n-1}$ many flags $(A',B')\in \EKR$ with distinct $n$-spaces, such that $A'\cap B=\emptyset$ and $B\cap B'=P$. Since $\Gauss{n}{1}\cdot \Gauss{2n-1}{n-1}$ is $\OO(q^{n-1})\cdot \OO(q^{(n-1)n})=\OO(q^{n^2-1})$ the statement follows.
\end{proof}

\begin{prop}
Let $n\geq 3$ and let $\EKR$ be a maximal coclique of $\Gamma_{2n}$ such that the number of red spaces in $\EKR$ is $\OO(q^{n^2-2})$. Then $|\EKR|$ is $\OO(q^{n^2+n-2})$.
\end{prop}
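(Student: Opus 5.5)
Assume $|\EKR|$ is not $\OO(q^{n^2+n-2})$ and derive a contradiction. By \Cref{L: many yellow pw skew} there are then either $n+1$ pairwise skew yellow $(n-1)$-spaces, or $n+1$ yellow $n$-spaces in general position. By duality it suffices to treat the first case, with yellow $(n-1)$-spaces $A_1,\ldots,A_{n+1}$ occurring in flags of $\EKR$. We then bound the number of distinct $n$-spaces occurring in flags of $\EKR$ by $\OO(q^{n^2-1})$. Each such $n$-space is contained in at most $\Gauss{n+1}{1}=\OO(q^n)$ flags of $\EKR$, so this gives $|\EKR|=\OO(q^{n^2+n-1})$. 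That is too weak by a factor $q$, so the argument has to be refined as described in the last paragraph.

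Sorting the flags. By the dual of \Cref{L: yellows meet almost all}, for each yellow $A_i$ the number of flags $(A',B')\in\EKR$ with pairwise distinct $(n-1)$-spaces and $B'\cap A_i=\emptyset$ is $\OO(q^{n^2-1})$. Equivalently, the set of $(n-1)$-spaces $A'$ that occur with some $B'$ skew to $A_i$ has size $\OO(q^{n^2-1})$. Call an $n$-space $B'$ \emph{good} if it meets all of $A_1,\ldots,A_{n+1}$. By \Cref{L: few that meet all} there are only $\OO(q^{n^2-1})$ good $n$-spaces. Split $\EKR$ into three parts:
\begin{enumerate}[(1)]
\item flags with red $A'$ or red $B'$: these number $\OO(q^{n^2+n-2})$ by hypothesis;
\item flags with $A',B'$ both yellow and $B'$ good;
\item flags with $A',B'$ both yellow and $B'$ skew to some $A_i$.
\end{enumerate}

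Counting (3). Each $A'$ in part (3) lies, for some $i$, in the set of $(n-1)$-spaces occurring with an $n$-space skew to $A_i$. So there are $(n+1)\cdot\OO(q^{n^2-1})$ such $A'$. Each is yellow, hence in at most $\Gauss{n}{1}=\OO(q^{n-1})$ flags, so part (3) has $\OO(q^{n^2+n-2})$ flags. Here yellowness gives exactly the saving of one factor $q$.

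Counting (2), the main obstacle. Naively, $\OO(q^{n^2-1})$ good $n$-spaces, each yellow and hence in at most $\OO(q^{n-1})$ flags, gives $\OO(q^{n^2+n-2})$ directly. So the yellowness of $B'$ already saves the needed factor; the only subtlety is to make sure this case split covers all flags. That is checked as above: every flag with both members yellow has $B'$ either good or skew to some $A_i$. Summing the three parts yields $|\EKR|=\OO(q^{n^2+n-2})$, contradicting the assumption; in fact the argument proves the bound directly. The step needing most care is the dual application of \Cref{L: yellows meet almost all}: it counts flags with distinct $(n-1)$-spaces, which is why part (3) is counted through its $(n-1)$-spaces rather than its $n$-spaces.
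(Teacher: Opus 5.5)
Your proof is correct and is essentially the paper's argument. After discarding flags that contain a red space, you take the pairwise skew yellow $(n-1)$-spaces $A_1,\ldots,A_{n+1}$ from \Cref{L: many yellow pw skew}, bound the flags whose $n$-space misses some $A_i$ by the dual of \Cref{L: yellows meet almost all} times $\Gauss{n}{1}$, and bound the remaining flags by \Cref{L: few that meet all} times $\Gauss{n}{1}$; the opening detour via an $\OO(q^{n^2+n-1})$ bound is never used and can simply be deleted.
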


\begin{proof}
 The number of red spaces is $\OO(q^{n^2-2})$ and a red space is contained in $\Gauss{n+1}{1}$ flags of $\EKR$, hence the number of flags in $\EKR$ that contain a red space is $\OO(q^{n^2-2})\cdot \Gauss{n+1}{1}=\OO(q^{n^2+n-2})$. 
   We show that the number of flags in $\EKR$ with a yellow space is also $\OO(q^{n^2+n-2})$. Assume that this is not the case.

     By \Cref{L: many yellow pw skew}, there are $n+1$ pairwise skew yellow $(n-1)$-spaces, or $n+1$ yellow $n$-spaces in pairwise general position in $\PG(2n,q)$. We assume that $\PG(2n,q)$ contains $n+1$ pairwise skew $(n-1)$-spaces $A_1,\ldots,A_{n+1}$ and show that the number of flags in $\EKR$ is $\OO(q^{n^2+n-2})$, for yellow $n$-spaces dual arguments suffice.
     
     Let $1\leq i\leq n+1$. 
     Any flag $(A,B)\in \EKR$ satisfies $B\cap A_i=\emptyset$ or $B\cap A_i\neq\emptyset$.
     By the dual statement of \Cref{L: yellows meet almost all}, the number of flags $(A,B)\in \EKR$ with distinct $(n-1)$-spaces and with $A_i\cap B=\emptyset$ is at most $\OO(q^{n^2-1})$. As the red spaces are already accounted for, we can safely assume that these $(n-1)$-spaces are all yellow. This yields that the number of flags $(A,B)\in \EKR$ with $A_i\cap B=\emptyset$ is $\OO(q^{n^2-1})\Gauss{n}{1}=\OO(q^{n^2+n-2})$. 
     
     Hence, the number of flags $(A,B)\in \EKR$ with $A_i\cap B=\emptyset$ for some $1\leq i\leq n+1$ is $(n+1)\OO(q^{n^2+n-2})=\OO(q^{n^2+n-2})$. All other flags in $\EKR$ must have an $n$-space that meets $A_1,\ldots,A_{n+1}$. By \Cref{L: few that meet all}, the number of $n$-spaces that meet $A_1,\ldots,A_{n+1}$ is $\OO(q^{n^2-1})$. As the red spaces are already accounted for, we can safely assume that these $n$-spaces are all yellow. This yields that the number of flags $(A,B)\in \EKR$ with $A_i\cap B\neq\emptyset$ for all $1\leq i\leq n+1$  is also $\OO(q^{n^2-1})\Gauss{n}{1}=\OO(q^{n^2+n-2})$. 
\end{proof}

\subsection*{Acknowledgement}

The author would like to thank Klaus Metsch for many helpful discussions.

\bibliographystyle{plainurl}
\bibliography{bib}

\end{document}